\theoremstyle{plain}
\newtheorem*{thm*}{Theorem}
\newtheorem{thm}{Theorem}
\Crefname{thm}{Theorem}{Theorems}
\newtheorem*{lem*}{Lemma}
\newtheorem{lem}[thm]{Lemma}
\Crefname{lem}{Lemma}{Lemmas}
\newtheorem*{claim*}{Claim}
\crefname{claim}{Claim}{Claims}
\Crefname{claim}{Claim}{Claims}
\Crefname{prop}{Proposition}{Propositions}
\crefname{cor}{Corollary}{Corollaries}
\crefname{conj}{Conjecture}{Conjectures}
\newtheorem{qn}[thm]{Question}
\Crefname{qn}{Question}{Questions}
\Crefname{obs}{Observation}{Observations}
\Crefname{ex}{Example}{Examples}
\theoremstyle{definition}
\Crefname{prob}{Problem}{Problems}
\newtheorem{defn}[thm]{Definition}
\Crefname{defn}{Definition}{Definitions}
\theoremstyle{remark}
\renewenvironment{proof}[1][]{\begin{trivlist}
\item[\hspace{\labelsep}{\bf\noindent Proof#1.\/}] }{\qed\end{trivlist}}
\newcommand{\remove}[1]{}
\newcommand{\floor}[1]{
    \left\lfloor #1 \right\rfloor
}
\newcommand{\eps}{\varepsilon}
\newcommand{\cliq}[2]{$(#1,#2)$-cliquey}
\newcommand{\E}{\mathbb{E}}
\newcommand{\I}{\mathcal{I}}
\begin{document}


\title{Counting independent sets in structured graphs}

\author{
Matija Buci\'c\thanks{School of Mathematics, Institute for Advanced Study and Department of Mathematics, Princeton University, Princeton, USA. Email: \href{mailto:matija.bucic@ias.edu} {\nolinkurl{matija.bucic@ias.edu}}.} \and
Maria Chudnovsky \thanks{Princeton University, Princeton, USA. Supported by NSF-EPSRC Grant DMS-2120644 and by AFOSR grant FA9550-22-1-0083.} \and
Julien Codsi \thanks{Princeton University, Princeton, USA.}
}

\date{}

\maketitle

 \begin{abstract}
    \setlength{\parskip}{\smallskipamount}
    \setlength{\parindent}{0pt}
    \noindent
    Counting independent sets in graphs and hypergraphs under a variety of restrictions is a classical question with a long history. It is the subject of the celebrated container method which found numerous spectacular applications over the years. We consider the question of how many independent sets we can have in a graph under structural restrictions. We show that any $n$-vertex graph with independence number $\alpha$ without $bK_a$ as an induced subgraph has at most $n^{O(1)} \cdot \alpha^{O(\alpha)}$ independent sets. This substantially improves the trivial upper bound of $n^{\alpha},$ whenever $\alpha \le n^{o(1)}$ and gives a characterization of graphs forbidding of which allows for such an improvement. It is also in general tight up to a constant in the exponent since there exist triangle-free graphs with $\alpha^{\Omega(\alpha)}$ independent sets. We also prove that if one in addition assumes the ground graph is chi-bounded one can improve the bound to $n^{O(1)} \cdot 2^{O(\alpha)}$ which is tight up to a constant factor in the exponent.
 \end{abstract}

\section{Introduction}
Problems involving counting independent sets in graphs and hypergraphs have a long history. They have been studied both due to their intrinsic interest and since one can encapsulate many natural questions in terms of counting independent sets in an appropriate (hyper)graph see e.g.\ a recent survey \cite{samotij-couting-survey} for many examples and a detailed history of such questions. Let us denote by $\alpha(G)$ the maximum size of an independent set and by $i(G)$ the number of independent sets in a graph $G$. There are two trivial bounds which often serve as a baseline for more involved arguments.
\begin{equation}\label{eq:trivial}
    2^{\alpha(G)} \le i(G) \le \sum_{j=0}^{\alpha(G)} \binom{n}{j}. 
\end{equation}
 The lower bound follows since all subsets of the maximum size independent set are themselves independent and the upper bound simply account for all subsets of size up to $\alpha(G)$. Both of these bounds can be tight, for example, if $G$ is an empty or complete (hyper)graph, respectively. Note also that if $\alpha(G)=\Omega(n)$ the upper bound is also exponential in $\alpha(G)$ so the two bounds match up a constant in the exponent. We will be consequently mostly interested in the regime when $\alpha$ is somewhat small compared to $n$ when one can approximate the upper bound on the right by $\left(\frac{n}{\alpha(G)}\right)^{\alpha(G)}$ or even more loosely by just $n^{\alpha(G)}$.

Trying to improve the upper bound in \eqref{eq:trivial} has garnered a lot of interest over the years and is the subject of the celebrated container method. In the case of graphs it was introduced in the 1980's by Kleitman and Winston \cite{KW80,KW82} who used it to count lattices and graphs without four-cycles. Variations of the method have been used over the years to attack a variety of problems for example Alon \cite{noga-containers} and Sapozhenko \cite{sapozhenko-containers} used it to count the number of independent sets in regular graphs which in turn has applications to counting sum-free sets in Abelian groups (see e.g.\ \cite{noga-containers,sapozhenko-sum-free, sum-free}). Another remarkable example is the recent breakthrough lower bound on the off-diagonal Ramsey numbers \cite{MV}. The method has been extended to the hypergraph case independently by Balogh, Morris, and Samotij \cite{BMS} and Saxton and Thomason \cite{ST} and has found an even more impressive array of applications, see e.g.\ the survey \cite{containers-survey} produced to accompany an ICM 2018 talk on the subject of the container method for more examples and information. 
At a high level, the container method allows one to translate knowledge about a variety of statistics in a (hyper)graph (for example having a control of codegrees) to improvements on the upper bound in \eqref{eq:trivial}. 

In this paper, we study whether having structural information about a graph leads to an improvement in the upper bound \eqref{eq:trivial}. Perhaps one of the most studied structural properties of graphs is being $H$-free for a small fixed graph $H$. Here and throughout the paper a graph $G$ being \emph{$H$-free} stands for not having $H$ as an induced subgraph. This leads to a very natural question, does $G$ being $H$-free imply a substantial improvement in the upper bound \eqref{eq:trivial}? On the positive side, Farber in \cite{farber} showed in 1987 that forbidding a $2K_2$ does indeed lead to such an improved bound. Unfortunately, the answer is in general no. If one takes a graph consisting of $\alpha-1$ vertex disjoint complete graphs of order as equal as possible (namely the complement of the $(\alpha-1)$-partite Tur\'an graph on $n$ vertices) one obtains a graph with roughly $(n/\alpha)^{\alpha-1}$ independent sets, which is quite close to the upper bound in \eqref{eq:trivial}. On the other hand, this graph avoids most graphs as induced subgraphs, in fact, all of its induced subgraphs are themselves vertex disjoint unions of complete graphs. Our first result shows that one can improve the upper bound substantially if we forbid any such graph as an induced subgraph.

\begin{thm}\label{thm:H-free-intro}
    Any $bK_a$-free $n$-vertex graph $G$ with $\alpha=\alpha(G)$ has $i(G) \le n^{O(1)}\cdot \alpha^{O(\alpha)}.$
\end{thm}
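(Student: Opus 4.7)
The plan is to prove the theorem by induction on $b$, with a secondary induction on $\alpha$ inside the inductive step.

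For the base case $b=1$, $G$ is $K_a$-free; since $G$ also has no independent set of size $\alpha+1$, Ramsey's theorem gives $n \le R(a,\alpha+1) = O_a(\alpha^{a-1})$, and the trivial count $i(G) \le \binom{n}{\le\alpha}\le n^\alpha$ already yields $i(G) \le (c_a\alpha^{a-1})^\alpha = \alpha^{O_a(\alpha)}$, as required.

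For the inductive step, suppose the theorem holds for $(b-1)K_a$-free graphs and let $G$ be $bK_a$-free with $b\ge 2$. If $G$ is $K_a$-free the base case applies, so assume $G$ contains an induced copy $C$ of $K_a$, and set $R := V(G)\setminus N[V(C)]$. The key structural observation is that $G[R]$ is $(b-1)K_a$-free: any induced $(b-1)K_a$ in $G[R]$ is vertex-disjoint from and anti-complete to $C$ by the definition of $R$, and together with $C$ would form an induced $bK_a$ in $G$. The primary inductive hypothesis therefore yields $i(G[R]) \le |R|^{O(1)}\alpha^{O(\alpha)}$. To count independent sets in $G$ itself I would use the basic recursion $i(G) = i(G\setminus V(C)) + \sum_{c\in V(C)} i(G\setminus N[c])$ obtained by conditioning on $I\cap V(C)$, which has size at most one since $V(C)$ is a clique. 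For each $c\in V(C)$, $\alpha(G\setminus N[c])\le \alpha-1$ because adjoining $c$ to any independent set of $G\setminus N[c]$ produces one in $G$, so the secondary induction on $\alpha$ controls $i(G\setminus N[c])$ by $n^{O(1)}(\alpha-1)^{O(\alpha-1)}$. For the remaining term $i(G\setminus V(C))$, where the independence number need not decrease, my plan is to iterate: repeatedly peel a copy of $K_a$ off the current graph until the residual graph is $K_a$-free and hence falls into the base case. Each peeling removes at least $a$ vertices, so the iteration has length at most $n/a$, and every side-contribution has the independence number dropped by at least one and is bounded by the secondary induction.

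The main obstacle I anticipate is accountancy of the polynomial-in-$n$ factor: naively each of the up-to-$n/a$ peelings contributes an additional factor of $n$, which would blow up the bound well beyond $n^{O(1)}$. The crux of the argument should therefore be a careful summing (rather than multiplying) of the side-contributions across the iteration, together with a clever choice of the cliques at each level---for instance, arranging them into a maximal anti-chain of at most $b-1$ pairwise anti-complete $K_a$'s, so that the primary-induction bound on the $(b-1)K_a$-free remainder $R$ can be combined with a case analysis over the $(a+1)^{O(b)}$-many possible intersections $I\cap(V(C_1)\cup\cdots\cup V(C_{b-1}))$, with all but one case reducing $\alpha$ by at least one and the remaining case yielding genuine progress in $n$. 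Executing this bookkeeping so that the final bound stays of the form $n^{O_{a,b}(1)}\alpha^{O_{a,b}(\alpha)}$ is the technical heart of the proof.
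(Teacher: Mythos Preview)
Your base case and the structural observation that the common non-neighbourhood of an induced $K_a$ is $(b-1)K_a$-free are both correct, and the recursion $i(G)=i(G\setminus V(C))+\sum_{c\in V(C)} i(G\setminus N[c])$ is valid. But the proposal has a genuine gap, and it is precisely the obstacle you flag without resolving.

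Take your maximal anti-complete family $C_1,\dots,C_k$ ($k\le b-1$) and set $S=\bigcup_i V(C_i)$. Casing on $T=I\cap S$: for $T\neq\emptyset$ you do drop $\alpha$ by one. But for $T=\emptyset$, the independent set is \emph{not} forced into the anticomplete remainder $R$ (which, by maximality, is in fact $K_a$-free, not merely $(b-1)K_a$-free); vertices of $I$ may have neighbours in $S$. So the $T=\emptyset$ branch only passes to $G\setminus S$, a graph on $n-|S|\ge n-a(b-1)$ vertices that is still $bK_a$-free with the same $\alpha$. You therefore need $\Theta(n)$ peelings, each contributing an additive side-term of order $(a+1)^{b-1}f(n,\alpha-1)$, and after telescoping you obtain $f(n,\alpha)\le C\,n\cdot f(n,\alpha-1)+\alpha^{O(\alpha)}$. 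Unrolling in $\alpha$ yields only $n^{\alpha}$, which is the trivial bound. This factor of $n$ per unit drop in $\alpha$ is intrinsic to the scheme; it is not a bookkeeping issue.

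The paper's proof proceeds along entirely different lines. First, a dependent-random-choice argument gives an induced K\"ov\'ari--S\'os--Tur\'an-type lemma: if $G$ is $bK_a$-free and every $m$ vertices span a $K_a$ (Ramsey supplies $m=\alpha^a$), then $G$ has at most $n^{b-\eps}/b!$ independent sets of size exactly $b$, for some $\eps=\eps(a,b)>0$. A second, container-flavoured lemma then converts this local saving into $i(G)\le n^{b-1}\cdot i(G,\,n^{1-\eps/2^{b-1}})$. One iterates this, shrinking $n$ by a fixed power at each step, until $n\le\alpha^{2a}$, where the trivial bound $n^{\alpha}\le\alpha^{2a\alpha}$ finishes the job; the induction is on $n$, not on $b$ or $\alpha$, and the geometric shrinkage keeps the polynomial factor at $n^{O_{a,b}(1)}$. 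As it happens, the paper's concluding remarks sketch exactly the Farber-type recursion you are attempting and note that it \emph{does} succeed---but only when one forbids $(b-1)K_2+K_a$, where peeling a single edge together with maximality of independent sets eliminates the problematic branch; for general $bK_a$ that shortcut is unavailable.
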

Besides characterizing which forbidden graphs lead to an improvement this result is also tight up to a constant factor in the exponent so long as $a \ge 3$ and $b \ge 2$.  Indeed, taking $b-1$ disjoint complete graphs of order about $n/b$ each gives an $n$-vertex $bK_a$-free graph with roughly $(n/b)^{b-1}$ independent sets and independence number $b-1$. This shows the polynomial factor in $n$ needs to grow with $b$. One can show the same is true for $a$ by taking a graph consisting of $a$ disjoint complete graphs of order about $n/a$ each and then placing additional edges between parts independently with suitable probability to ensure there will be few copies of (not necessarily induced) $K_{a,a}$'s in the complement but with still many independent sets. One can then add a few edges to destroy these copies without destroying too many independent sets.  
More interestingly, one can not, in general, improve the second term either (beyond a constant factor in the exponent). This follows since there exist triangle-free graphs (so $bK_a$-free for any $a \ge 3$) with at least $\alpha(G)^{\Omega(\alpha(G))}$ independent sets. This follows by combining two results. The first one is due to Davies, Jenssen, Perkins and Roberts \cite{triangle-free-counts} (see also \cite{dhruv} for a slightly weaker but qualitatively similar result) showing that any $n$-vertex triangle-free graph with maximum degree $d$ has $i(G) \ge \exp\left(\left(\frac12+o_d(1)\right)\cdot \frac{\log^2 d}{d}\cdot n\right)$. The second is that there exist triangle-free graphs $G$ with all degrees being $(1+o(1))\sqrt{\frac12 n \log n}$ and $\alpha(G)\le(1+o(1))\sqrt{2n \log n}$. One obtains such a graph with high probability from the famous triangle-free process see e.g.\ \cite{bohman-keevash,fiz-pontiveros-morris} for more details on this topic. This produces a triangle-free graph with at least $\alpha^{(\sqrt{2}/4+o(1))\alpha}$ independent sets and $\alpha=\Theta(\sqrt{n \log n})$.

Our second main result shows that we can in fact go further and even match the trivial lower bound from \eqref{eq:trivial} up to a constant factor in the exponent if we in addition assume our ground graph is chi-bounded. Here, a hereditary class of graphs $\mathcal{G}$ is said to be chi-bounded if there exists some function $g:\mathbb{N} \to \mathbb{R}$ such that $\chi(G) \le g(\omega(G))$ for every $G \in \mathcal{G}$. It is a well-studied notion with many interesting applications and connections, see e.g.\ a recent survey of Scott and Seymour \cite{chi-boundedness}.

\begin{thm}\label{thm:chi-bounded-intro}
    Let $\mathcal{G}$ be a chi-bounded hereditary class of graphs. For any $bK_a$-free $n$-vertex graph $G \in \mathcal{G}$ with $\alpha=\alpha(G)$ we have $i(G) \le n^{O(1)} \cdot 2^{O(\alpha)}.$   
\end{thm}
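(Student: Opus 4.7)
The plan is to induct on $b$. The base case $b=1$ uses chi-boundedness directly: $G$ being $K_a$-free gives $\chi(G) \le g(a-1)$, a constant depending only on $a$ and $\mathcal{G}$, hence $n \le g(a-1)\cdot\alpha$, and so $i(G) \le 2^n = 2^{O(\alpha)}$.

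For the inductive step with $b \ge 2$, if $G$ itself is $(b-1)K_a$-free then the inductive hypothesis applies, so assume $G$ contains an induced copy of $(b-1)K_a$, say $M = Q_1 \cup \cdots \cup Q_{b-1}$ with vertex set $U$ of size $(b-1)a$. The crucial structural observation is that $T := V(G)\setminus N[U]$, the set of vertices non-adjacent to all of $U$, must be $K_a$-free: otherwise an induced $K_a$ inside $T$ together with $M$ would form an induced $bK_a$ in $G$. Applying the base case to $G[T]$ then yields $|T| \le g(a-1)\cdot \alpha = O(\alpha)$. Partition the remainder $R := N(U)\setminus U$ by neighborhood in $U$, writing $R = \bigsqcup_{\emptyset \ne S \subseteq U} X_S$, which gives at most $2^{|U|} = O(1)$ neighborhood classes.

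An independent set $I$ of $G$ decomposes as $I_U \sqcup I_T \sqcup I_R$ according to $V(G) = U \sqcup T \sqcup R$, with no edges between $U$ and $T$. Summing over the $(a+1)^{b-1} = O(1)$ choices of $I_U$ (an independent set in $G[U] \cong (b-1)K_a$) and over the $\le 2^{|T|} = 2^{O(\alpha)}$ subsets $I_T \subseteq T$, we obtain
\begin{equation*}
i(G) \le \sum_{I_U,\, I_T} i\bigl(G[R_{I_U}\setminus N(I_T)]\bigr), \quad\text{where}\quad R_{I_U} := \bigsqcup_{S \cap I_U = \emptyset} X_S.
\end{equation*}
The key structural claim is that each subgraph $G[R_{I_U}\setminus N(I_T)]$ is $(b-1)K_a$-free: an induced $(b-1)K_a$ inside it, together with a clique $Q_i$ of $M$ whose vertices are missed by the neighborhood patterns of all contributing $X_S$'s, would reconstruct an induced $bK_a$ in $G$. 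Applying the inductive hypothesis on $b$ to each such subproblem gives a bound of $n^{O(1)} \cdot 2^{O(\alpha)}$, and combining with the $O(1)$ and $2^{O(\alpha)}$ factors from the summation yields the desired $i(G) \le n^{O(1)} \cdot 2^{O(\alpha)}$.

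The main obstacle is verifying the key claim uniformly over all $(I_U, I_T)$: the delicate case is when the contributing $X_S$'s collectively have neighborhoods hitting every clique $Q_i$ of $M$, so no single "missed" $Q_i$ remains to reconstitute $bK_a$. To handle this, one either replaces $M$ by an alternative induced $(b-1)K_a$ built partly from vertices inside the $X_S$'s (returning to the well-behaved case via a suitable choice of $M$), or uses the observation that the obstruction pattern forces $\alpha$ to drop strictly inside the subproblem, allowing an appeal to Theorem~\ref{thm:H-free-intro} whose $\alpha^{O(\alpha)}$ factor is then absorbed into $n^{O(1)} \cdot 2^{O(\alpha)}$. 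Careful tracking of the $O(\cdot)$ constants through the induction on $b$ preserves the claimed form of the bound, with final constants depending only on $a$, $b$, and the chi-bounding function $g$.
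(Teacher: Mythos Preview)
Your inductive scheme has a genuine gap at the ``key structural claim.'' For $a\ge 2$, an independent set $I_U$ in $(b-1)K_a$ meets each clique $Q_i$ in at most one vertex, so for \emph{every} choice of $I_U$ there remain vertices of every $Q_i$ outside $I_U$; hence the contributing neighbourhood patterns $\{S:S\cap I_U=\emptyset\}$ can still meet every $Q_i$, and nothing forces a $(b-1)K_a$ inside $G[R_{I_U}\setminus N(I_T)]$ to miss a whole $Q_i$. Concretely, take $a=3$, $b=2$, $G=K_6$, and $M$ any triangle: then $T=\emptyset$, $R$ is the complementary triangle, and for $I_U=I_T=\emptyset$ the subproblem $G[R]$ is itself a $K_3$, so it is \emph{not} $(b-1)K_a$-free. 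The ``delicate case'' is thus the generic case, not a corner case.

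Neither proposed fix repairs this. Replacing $M$ by a $(b-1)K_a$ found inside $R$ restarts the decomposition with no guarantee of progress (in the $K_6$ example every choice of $M$ leaves a triangle in $R$). The second fix also fails: when $I_U=I_T=\emptyset$ the independence number does not drop at all, and even when $|I_U|+|I_T|\ge 1$ a drop of a bounded amount still leaves \Cref{thm:H-free-intro} producing an $\alpha^{O(\alpha)}$ factor, which is not $2^{O(\alpha)}$. If instead you try to handle the $(\emptyset,\emptyset)$ term by inducting on $n$ (since $|R|<n$), the recursion does not close: $(|R|/n)^{C}=1-O(1/n)$ while the remaining $(I_U,I_T)$ terms already contribute at least a fixed positive amount, so the right-hand side exceeds $n^{C}2^{C\alpha}$.

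For comparison, the paper does not induct on $b$ at the top level at all. It uses chi-boundedness only to show that every $g(a)\alpha$ vertices contain a $K_a$ (so $G$ is \cliq{g(a)\alpha}{a} with the parameter linear in $\alpha$), then bounds $i_b(G)$ via a dependent-random-choice lemma (\Cref{lem:hypergraph-induced-KST}), and finally iterates a container-style recursion in $n$ (\Cref{lem:recursion}) to pass from a bound on $i_b$ to a bound on $i(G)$.
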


Since forbidding $bK_2$ as an induced subgraph implies chi-boundedness, we conclude that the stronger bound of \Cref{thm:chi-bounded-intro} also holds in \Cref{thm:H-free-intro} when $a=2$, completing the picture as we have shown such an improvement is impossible when $a\ge 3$. 

There are two main tools behind our arguments. The first one is a certain hypergraph analogue of an \emph{induced} K\"ov\'ari-S\'os-Tur\'an Theorem introduced in the graph case by Loh, Tait, Timmons and Zhou \cite{induced-KST} and extended and used to settle a variety of problems recently in \cite{girao2023induced,axenovich2024induced,hunter2024k,bourneuf2023polynomial,illingworth2021induced}. The proof of this lemma is based on the dependent random choice technique (see e.g.\ the survey \cite{DRC-survey} for more information). The second ingredient is a certain local-to-global transference lemma for independent set counts, the proof of which is based on the ideas behind the container method.

\textbf{Notation.} Given a graph $G$ we denote by $\alpha(G),\omega(G),\chi(G)$ and 
$i(G)$ the independence number, clique number, chromatic number, and number of independent sets in $G$ respectively. We denote by $I_t(G)$ the family of all independent sets of size $t$ in $G$ and write $i_t(G)=|I_t(G)|$. Given a set of vertices $X$ in a graph $G$ we write $d_G(X)$ for the number of common neighbors of all vertices in $X$. For counting simplification purposes we consider an empty set of vertices as independent.
For the remainder of this paper, all logs are in base 2.

\section{Counting independent sets locally} 
In this section, we will prove our key technical tool, namely a hypergraph variant of the induced version of the K\"ov\'ari-S\'os-Tur\'an Theorem \cite{KST}. The classical theorem of K\"ov\'ari, S\'os, and Tur\'an dating back to 1954 states that if an $n$ vertex graph does not contain a $K_{s,s}$ as a not necessarily induced subgraph, then it has at most $O(n^{2-1/s})$ edges. 
It proved itself as an incredibly useful tool over the years as in many problems one can easily verify there are no $K_{s,s}$-subgraphs and as a result conclude that the graph in question is ``locally sparse''. Erd\H{o}s extended this result to hypergraphs in 1964 \cite{hypergraph-KST} showing that forbidding the $r$-partite $r$-uniform complete hypergraph as a not necessarily induced subgraph of an $r$-uniform $n$-vertex hypergraph implies the number of edges is at most $O(n^{r-\eps})$ for some $\eps>0$ depending on the forbidden hypergraph. Another natural extension of the classical theorem in which one only forbids $K_{s,s}$ as an \emph{induced} subgraph was only considered about 10 years ago in the graph case by Loh, Tait, Timmons, and Zhou \cite{induced-KST} and has found some very interesting further extensions and applications in the last few years. Unfortunately, the straightforward generalisation can not hold, as even a complete graph is $K_{s,s}$-free so long as $s \ge 2$. However, if one, in addition, forbids a clique (even of a size polynomial in $n$) one suddenly recovers the classical bound. 

Our technical lemma gives in a sense a common extension of both of these results. Roughly speaking our result states that if one forbids a complete $r$-partite graph as an induced subgraph and assumes there are no large cliques in our graph then there are at most $O(n^{r-\eps})$ cliques of size $r$ for some $\eps>0$ depending on the forbidden graph. Our result is actually slightly stronger, we assume a more flexible condition that every $m$ vertices contain an independent set of size $a$ (not having a large clique implies such a condition holds via Ramsey's Theorem).
We note that in \cite{induced-KST} a similar result focusing on the number of larger cliques has been proved but only under a much stronger assumption that a complete bipartite graph is forbidden. We also note that our result is not a full-fledged extension of Erd\H{o}s' result as it only applies to clique complexes, namely hypergraphs whose edges are cliques of a graph. One can actually prove such a stronger variant using a similar approach although since we do not need it here we choose not to do so. Part of the reason for this is that for our chi-bounded result, we need a very precise bound here, namely one that gives us a (slight) improvement even under a much weaker local assumption. We note also that the result is stated in the complement compared to the above discussion since this is how we will use it.

We start with a precise definition of the local condition we will use.
\begin{defn}
    A graph $G$ is \cliq{m}{a} if all $m$-vertex induced subgraphs of $G$ contain a $K_a$.
\end{defn}
So in particular, by Ramsey's theorem, we know that any graph $G$ is \cliq{\alpha(G)^a}{a} for any $a$.

We advise the reader that the following lemma and its proof might be initially easier to read under the assumption that $m$ is polynomially smaller than $n$ which ensures $\eps>0$ is an absolute constant depending only on $a$ and $b$. Indeed, this is sufficient for our proof of \Cref{thm:H-free-intro} and we suspect for most future applications as well. However, as mentioned above we need the more precise version, which allows for smaller, subpolynomial gains under weaker assumptions to prove \Cref{thm:chi-bounded-intro}.

\begin{lem}\label{lem:hypergraph-induced-KST}
    Let $a,b \ge 1$ and $n \ge m$ be integers. Let $G$ be a $bK_a$-free $n$-vertex graph which is \cliq{m}{a}. Then there are at most ${n^{b-\eps}}/{b!}$ independent sets of size $b$ in $G$, where 
    $$\eps:=\left(\frac{\log \frac nm}{8ab \log n}\right)^{b}.$$
\end{lem}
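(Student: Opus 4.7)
The argument proceeds by induction on $b$. The base case $b=1$ is essentially vacuous: if $G$ is $K_a$-free yet every $m$-vertex induced subgraph of $G$ contains a $K_a$, then necessarily $n<m$, so $\eps<0$ and the claimed bound holds trivially.

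For the inductive step I would carry out a dependent-random-choice-style double count of the quantity
\[
\mathcal{N} \;=\; \sum_{K \cong K_a\text{ in }G} i_{b-1}\bigl(G[V\setminus N_G[K]]\bigr) \;=\; \sum_{I \in I_{b-1}(G)}\#\bigl\{K_a \subseteq G[V\setminus N_G[I]]\bigr\},
\]
which counts pairs $(K,I)$ with $K\cong K_a$ in $G$, $I\in I_{b-1}(G)$, and $K,I$ disjoint and completely anti-complete in $G$. The first expression is controlled from above by the inductive hypothesis: each $G[V\setminus N_G[K]]$ is $(b-1)K_a$-free (otherwise $K$ together with a $(b-1)K_a$ in its common non-neighbourhood witnesses $bK_a$ in $G$) and remains \cliq{m}{a} as an induced subgraph, so induction applies to each term; summing over the at most $\binom{n}{a}$ choices of $K$ yields an upper bound for $\mathcal{N}$.

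The second expression is controlled from below using the \cliq{m}{a} hypothesis, which by a standard double count implies that any induced subgraph on $n'\ge m$ vertices contains at least $\binom{n'}{a}/\binom{m}{a}$ copies of $K_a$. Together with the identity
\[
\sum_{I \in I_{b-1}(G)} \bigl|V\setminus N_G[I]\bigr| \;=\; b\cdot i_b(G)
\]
(each $b$-independent set contributes once per deleted vertex) and a power-mean inequality converting a first-moment estimate into an $a$-th moment estimate, the assumed lower bound on $i_b(G)$ translates into a lower bound on $\mathcal{N}$. Comparing the two sides yields an inequality from which the desired bound $i_b(G)\le n^{b-\eps}/b!$ can be extracted.

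The main obstacle is the precise bookkeeping of $\eps$. The inductive bound on $i_{b-1}\bigl(G[V\setminus N_G[K]]\bigr)$ weakens sharply when $|V\setminus N_G[K]|$ is only slightly above $m$, so one must split the sum or use a sufficiently generous estimate in that regime. Matching the exact form $\eps=\bigl(\log(n/m)/(8ab\log n)\bigr)^b$, with its $b$-fold exponentiation, requires iterating the reduction $b$ times and carefully choosing the parameters at each step; the explicit constants $8a$, the factor $b$ in the denominator, and the $b$-th power should all emerge from this balancing act.
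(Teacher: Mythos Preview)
Your double count of pairs $(K,I)$ with $K\cong K_a$ anti-complete to $I\in I_{b-1}(G)$ is a natural idea, and the observations that $G[V\setminus N_G[K]]$ is $(b-1)K_a$-free and that $\sum_I|V\setminus N_G[I]|=b\,i_b(G)$ are both correct. The gap is quantitative and, as far as I can see, not repairable within your scheme: the amplification exponent in your power-mean step is the \emph{fixed} number $a$, and there is no free parameter to tune. Running your argument for $b=2$ (where the inductive input is simply $|G_K|<m$ for each $K_a$-free $G_K$), Jensen yields only
\[
i_2(G)\ \lesssim\ n^{2-1/a}\,m^{1+1/a},
\]
which beats $n^2/2$ only once $n>m^{a+1}$; for $m<n\le m^{a+1}$ you get no saving at all, whereas the lemma claims a positive $\eps$ for every $n>m$. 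The same unabsorbed factor of $m$ recurs at each inductive step for larger $b$, so ``careful bookkeeping'' cannot rescue the stated $\eps$. This matters: the lemma is later applied (in the proof of \Cref{thm:chi-bounded}) in the regime $2m\le n\le m^2$, precisely where your bound is vacuous.

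The paper's argument differs in two essential ways. First, it replaces your $K_a$ by a random $t$-tuple with $t=\lfloor 4ab\log n/\log(n/m)\rfloor$, letting $U$ be its common non-neighbourhood; Jensen now carries exponent $t$, and $t$ is chosen exactly so that $t\eps_b<\eps_{b-1}/2$, which is what produces the recursive form $\eps_b=\bigl(\log(n/m)/(8ab\log n)\bigr)^b$ and keeps $\eps>0$ however close $n$ is to $m$. Second, the inductive hypothesis is used in the \emph{contrapositive}: one shows that some outcome $U$ has $i_{b-1}(G[U])>n^{b-1-\eps_{b-1}}/(b-1)!$ \emph{and} that every $a(b-1)$-subset of $U$ has at least $m$ common non-neighbours; the first fact forces $G[U]$ to contain an induced $(b-1)K_a$, and the second supplies a further $K_a$ in its common non-neighbourhood, contradicting $bK_a$-freeness. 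Your plan of bounding $i_{b-1}(G_K)$ from above for each $K_a$ is the ``dual'' direction and does not combine with the dependent random choice step.
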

\begin{proof}
We write $\eps_b:=\left(\frac{\log \frac nm}{8ab \log n}\right)^{b}$ as the values of $m,a$ and $n$ for which we will use this expression always remain the same.
We prove the lemma by induction on $b$.  
If $b=1$, the lemma is vacuous since $G$ being \cliq{m}{a} and $K_a$-free imply we must have $n < m$. Let us now assume $b \ge 2$ and that the lemma holds for any $(b-1)K_a$-free graph. Let $G$ be a $bK_a$-free \cliq{m}{a} graph with $n$ vertices. If $a=1$ or $n<b$ there are no independent sets of size $b$ in $G$ and the lemma holds, so we may in addition assume $a \ge 2$ and $n \ge b$. 

We may assume $n > m$ as otherwise, $\eps_b=0$ and the desired bound is larger than $\binom{n}{b}$ so is trivially true. Suppose towards a contradiction that $G$ has more than $\frac{n^{b-\eps_b}}{b!}$ independent sets of size $b$. 
Our general strategy will be to find a set of vertices containing many independent sets of size $b-1$ for which all $a(b-1)$ subsets have more than $m$ non-neighbours. By induction, this set will contain $(b-1)K_a$ which will be used to create an induced $bK_a$.

Let $T$ be a random subset of vertices of $G$ obtained by sampling $t=\floor{\frac{4ab \log n}{\log \frac nm}}> 2ab\cdot \frac{\log n}{\log \frac nm}$ times uniformly at random, with repetitions, a vertex of $G$. Let $U$ be the set of vertices not adjacent to any vertex of $T$. 
Let $X$ count the number of independent sets of size $b-1$ contained in $U$. An independent set $I$ of size $b-1$ is contained within $U$ only if all $t$ vertices we sampled belong to its common non-neighbourhood, which happens with probability $(d_{\bar{G}}(I)/n)^{t}$ so by using Jensen's inequality (and convexity of $f(x)=x^t$) we get
\begin{align*}
    \E X = \sum_{I \in I_{b-1}(G)} \left(\frac{d_{\bar{G}}(I)}{n}\right)^t \ge |I_{b-1}(G)| \left(\frac{\sum_{I \in I_{b-1}(G)} 
 d_{\bar{G}}(I)}{n|I_{b-1}(G)|}\right)^t &\ge \frac{n^{b-1}}{(b-1)!} \left(\frac{b|I_b(G)|}{n\cdot \frac{n^{b-1}}{(b-1)!}}\right)^t\\ 
 &>\frac{n^{b-1}}{(b-1)!} \cdot n^{-t\eps_b}\\
 &> \frac{n^{b-1}}{(b-1)!}\cdot n^{-\eps_{b-1}/2},
\end{align*}

where in the second inequality we used $|I_{b-1}(G)|\le \binom{n}{b-1}\le \frac{n^{b-1}}{(b-1)!}$ (and the fact this term appears with a power $1-t \le 0$) and the hypergraph handshake lemma. In the third inequality, we used the assumed lower bound on $|I_b(G)|$. In the final inequality we used $t\eps_b= \floor{\frac{4ab \log n}{\log \frac nm}}\cdot  \left(\frac{\log \frac nm}{8ab \log n}\right)^{b}< \frac12 \left(\frac{\log \frac nm}{8a(b-1) \log n}\right)^{b-1}=\frac{\eps_{b-1}}2.$
On the other hand, given a set of $a(b-1)$ vertices with less than $m$ common non-neighbours the probability that this set is a subset of $U$ is at most $\left(\frac{m}{n}\right)^t$. So if we let $Y$ be the random variable counting the number of $a(b-1)$-sized subsets of $U$ with less than $m$ common non-neighbours we have 
$$\E Y \le \binom{n}{a(b-1)}\left(\frac{m}{n}\right)^t \le {n^{a(b-1)}}\cdot 2^{-t \log \frac nm}< {n^{a(b-1)}}\cdot n^{-2ab} ={n^{-a(b+1)}}.
$$
This shows that there is an outcome for which 
$$X-\binom{n}{b-1}\cdot Y > \frac{n^{b-1-\eps_{b-1}/2}-n^{-2-(a-1)(b+1)}}{(b-1)!}\ge \frac{n^{b-1-\eps_{b-1}/2}-n^{-2}}{(b-1)!}.$$ 
Let us consider such an outcome $U$. Note that since $X \le \binom{n}{b-1}$ and $n^{b-1-\eps_{b-1}/2}-n^{-2}>0$ we must have $Y=0$. Furthermore, since $b \ge 2$ we have $\eps_{b-1} \le 2(b-1)\frac{\log n}{\log \frac{n}{m}},$ which is equivalent to $n^{b-1-\eps_{b-1}/2}>m^{b-1}.$ This implies that $X>\frac{m^{b-1}-n^{-2}}{(b-1)!}>\binom{m-1}{b-1},$ so $|U| \ge m.$ This, together with $a \ge 2$ implies there must be an edge inside $G[U],$ so $X \le \binom{n}{b-1}-2\le \frac{n^{b-1}-2}{(b-1)!},$ where we used $n \ge b \ge 2$. Combined with the above lower bound on $X$ we get $n^{b-1}-2 > n^{b-1-\eps_{b-1}/2}-n^{-2}$ which implies $n^{\eps_{b-1}/2}>1+\frac{1}{n^{b-1}}$ which in turn gives 
$X> \frac{n^{b-1-\eps_{b-1}/2}-n^{-2}}{(b-1)!}\ge \frac{n^{b-1-\eps_{b-1}}}{(b-1)!}.$

If $b=2$ this implies $|U|=|X|>n^{1-\eps_1}> m$ so we can find a $K_a$ inside of $G[U]$. For $b \ge 3$ consider an auxiliary graph $G'$ on the same vertex set for which $G'[U]=G[U]$ but every vertex in $V(G) \setminus U$ is adjacent to every other vertex of $G'$. $G'$ is an $n$ vertex graph, is clearly \cliq{m}{a} and has more than $\frac{n^{b-1-\eps_{b-1}}}{(b-1)!}$ independent sets of size $b-1$. So by the inductive assumption, it must contain a $(b-1)K_a$ as an induced subgraph. Since $b-1>2$ the vertices of this $(b-1)K_a$ are not adjacent to all other vertices and hence must belong to $U$. Hence, in either case, we find an induced copy of $(b-1)K_a$ inside $G[U]$. Finally, since $Y=0$ the $a(b-1)$ vertices comprising this $(b-1)K_a$ have at least $m$ common non-neighbours. Among them, we can find a $K_a$ giving us an induced $bK_a$ in $G$ and the desired contradiction.
\end{proof}

\section{Translating counts from local to global}
In this section, we prove our second technical result which will allow us to propagate a tiny gain in the number of independent sets of some small size $b$ to a more substantial one globally. The basic idea behind the proof is reminiscent of the proofs of the container theorems (see e.g.\ \cite{BMS, ST}) although for the specific regime we work with we have a very simple proof (motivated in part by the ideas in \cite{jacob-paper}).

The following definition will come in useful in tracking independent set counts on subgraphs.
\begin{defn}
Given a graph $G$ and a real $m \ge 0$ let 
$$i(G,m):=\max_{\substack{G' \subseteq G, |G'| \le m}} i(G').$$
In other words, $i(G,m)$ denotes the maximum number of independent sets contained in an induced subgraph of $G$ with up to $m$ vertices.
\end{defn}

\begin{lem}\label{lem:recursion}
    Let $b \ge 1,$ and suppose $G$ is an $n$-vertex graph with at most $n^{b-\eps}/b!$ independent sets of size $b$. Then, $i(G) \le n^{b-1}\cdot i(G, n^{1-\eps/2^{b-1}})$.
\end{lem}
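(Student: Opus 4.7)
My plan is to proceed by induction on $b$. The base case $b = 1$ is essentially tautological: the hypothesis $i_1(G) = n \le n^{1-\eps}/1!$ forces $\eps \le 0$ (or $n \le 1$), in which case $m := n^{1-\eps/2^{0}} \ge n$ and $G$ itself fits inside $i(G, m)$, giving $i(G) \le 1 \cdot i(G, m)$.

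For the inductive step ($b \ge 2$), the starting point is the handshake-type identity $\sum_{v \in V(G)} i_{b-1}(G_v) = b \cdot i_b(G)$, where $G_v := G[\overline{N}(v)]$ is the subgraph induced on the non-neighborhood of $v$; combined with the hypothesis this yields $\sum_v i_{b-1}(G_v) \le n^{b-\eps}/(b-1)!$. Writing $n_v := |V(G_v)|$, I would classify $v$ as \emph{light} if either $n_v \le m$ or $i_{b-1}(G_v) \le n_v^{b-1-\eps/2}/(b-1)!$, and as \emph{heavy} otherwise. A Markov-type estimate using the averaging bound shows that the heavy set is small.

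Next I would fix an ordering of $V(G)$ placing heavy vertices last and decompose $i(G) = 1 + \sum_v i(G_v^>)$ via the minimum vertex of each nonempty independent set, where $G_v^> := G[\overline{N}(v) \cap V_{>v}]$. For light $v$ with $n_v \le m$, the bound $i(G_v^>) \le i(G, m)$ is immediate. For the other light $v$, applying the inductive hypothesis to $G_v$ with parameter $\eps/2$ produces shrinking to size $n_v^{1 - (\eps/2)/2^{b-2}} = n_v^{1-\eps/2^{b-1}} \le m$ and gives a contribution of at most $n^{b-2} \cdot i(G, m)$. For heavy $v$, since they come last, $V_{>v}$ lies in the heavy set, so the contribution is controlled by $|V_h|$. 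Summing at most $n$ light contributions of $n^{b-2} \cdot i(G, m)$ and a heavy contribution using $|V_h|$ should yield $i(G) \le n^{b-1} \cdot i(G, m)$.

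The main obstacle will be pinning down the constants to land exactly at $n^{b-1}$ and cleanly absorbing the heavy contribution: for small $b$ and small $\eps$ the crude Markov estimate may not directly give $|V_h| \le m$, so either the heavy/light threshold will need tuning or a second iteration of the same decomposition on $G[V_h]$ will be required. A related subtlety is the case $b = 2$, where the inductive hypothesis at $b - 1 = 1$ is vacuous; that case would have to be treated as a separate base case, using that $i_2(G)$ directly bounds the number of non-edges of $G$ so by Markov most vertices have non-degree below $m$.
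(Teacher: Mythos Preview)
Your strategy is genuinely different from the paper's, which does \emph{not} induct on $b$ at all. Instead, the paper sets up a cascade of ``typical/atypical'' sets simultaneously at every level: all $b$-sets are atypical; a $(b-i)$-set is atypical if it extends to at least $n^{1-\eps/2^{i}}$ atypical $(b-i+1)$-sets. A double count gives $|\I_{b-i}|\le n^{b-i-\eps/2^{i}}/(b-i)!$. Each independent set is then charged to its largest \emph{typical} subset, and if no typical subset exists it is confined to the atypical singletons, of which there are at most $n^{1-\eps/2^{b-1}}$. Summing $\sum_{j=0}^{b-1}\binom{n}{j}\cdot i(G,n^{1-\eps/2^{b-1}})$ gives the bound in one pass.

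Your inductive scheme has a concrete gap at $b=3$. The constraint on the threshold $T$ for ``heavy'' is two-sided: you need $T\le m^{\,b-1-\eps/2}/(b-1)!$ so that light type-2 vertices admit the inductive hypothesis at level $b-1$ with parameter $\eps/2$, and you need $T\ge n^{b-\eps}/\big((b-1)!\,m\big)$ so that Markov yields $|V_h|\le m$. Eliminating $T$ gives $n^{b-\eps}\le m^{\,b-\eps/2}$, which with $m=n^{1-\eps/2^{b-1}}$ reduces to $b/2^{b-1}-\tfrac12\le \eps/2^{b}$. This fails for $b=3$ (it forces $\eps\ge 2$), so no tuning of the threshold works. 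Your second fix, iterating on $G[V_h]$, also stalls: the only information you have is $i_b(G[V_h])\le i_b(G)\le n^{b-\eps}/b!$, which, relative to $|V_h|\le n^{1-\Theta(\eps^2)}$, is \emph{not} of the form $|V_h|^{\,b-\eps'}/b!$ with $\eps'\ge\eps$; so the recursion does not improve the exponent toward $1-\eps/2^{b-1}$. The difficulty is that a single Markov step collapses the whole chain of levels into one comparison and loses exactly the factor that the paper's per-level cascade preserves. Your scheme is sound at $b=2$ (where every light vertex is type 1 and Markov on non-degrees gives $|V_h|<m$ directly) and for $b\ge 4$ (where the arithmetic above goes through), but the $b=3$ rung is missing, so the induction cannot get off the ground beyond $b=2$.
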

\begin{proof}
We call any independent set of size $b$ atypical. Now for any $1 \le i \le b-1$ we call an independent set of size $b-i$ \emph{typical} if it belongs to fewer than $n^{1-\eps/2^i}$ atypical independent sets of size $b-i+1$ and we say it is \emph{atypical} otherwise. Let $\I_t$ denote the collection of atypical independent sets of size $t$. So $\I_b$ consists of all independent sets of size $b$ in $G$ and has size $|\I_b| \le n^{b-\eps}/b!$. 
Since each atypical independent set of size $b-i$ belongs to at least $n^{1-\eps/2^i}$ atypical independent sets of size $b-i+1$ and each of these sets can contain at most $b-i+1$ of them we conclude $|\I_{b-i}|\cdot n^{1-\eps/2^i} \le |\I_{b-i+1}|\cdot (b-i+1)$. 
We conclude that
\begin{equation}\label{eq:num-typical}
    |\I_{b-i}| \le  \frac{|\I_{b}|\cdot b\cdot (b-1) \cdots (b-i+1)}{n^{1-\eps/2}\cdot n^{1-\eps/4}\cdots n^{1-\eps/2^i}}\le \frac{n^{b-i-\eps/2^i}}{(b-i)!}.
\end{equation} 

We now count the independent sets of $G$ based on the size of the largest typical independent set they contain. Note that there are at most $\binom{n}{b-i}i(G,n^{1-\eps/2^{i}})$ independent sets in $G$ with the largest typical independent set they contain having size $b-i$. Indeed, there are $\binom{n}{b-i}$ choices for the typical independent set, and once this is fixed the rest of the independent set is restricted to vertices which extend it into any atypical independent set of size $b-i+1$ (by maximality) and by definition of typicality there are at most $n^{1-\eps/2^i}$ such vertices. 
This only leaves the independent sets not containing any typical independent sets at all. Note that any such set is restricted to use only the vertices which are atypical independent sets of size one, of which there are by \eqref{eq:num-typical} at most $n^{1-\eps/2^{b-1}}.$ Putting all of this together we conclude that the number of independent sets in $G$ is at most 
\begin{equation*}
    i(G,n^{1-\eps/2^{b-1}})+\sum_{i=1}^{b-1} \binom{n}{b-i}i(G,n^{1-\eps/2^{i}}) \le {n}^{b-1}\cdot i(G,n^{1-\eps/2^{b-1}}),
\end{equation*}
as desired.
\end{proof}

\section{Counting independent sets in $H$-free graphs}
In this section, we prove our main results. We begin with \Cref{thm:H-free-intro} which we state here in a slightly more precise form.
\begin{thm}\label{thm:H-free}
    Let $a,b \ge 1$ be integers. There exists $C=C(a,b)\ge 0$ such that any $bK_a$-free $n$-vertex graph $G$ with $\alpha=\alpha(G)$ has $i(G) \le n^C\cdot \alpha^{2a\alpha}.$
\end{thm}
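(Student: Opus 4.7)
The plan is to iterate \Cref{lem:hypergraph-induced-KST} and \Cref{lem:recursion} to pass from $G$ to a shrinking sequence of induced subgraphs until the residual subgraph is small enough to handle by the trivial bound $\sum_{j=0}^{\alpha}\binom{n}{j}$. The key enabling observation is that by Ramsey's theorem every induced subgraph $G'\subseteq G$ is automatically \cliq{\alpha^a}{a} (since $\alpha(G')\le \alpha$), so we can fix the cliqueyness parameter to $m:=\alpha^a$ throughout while only the vertex count decreases.

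Setting $G_0:=G$ and $n_0:=n$, at step $i$ we apply \Cref{lem:hypergraph-induced-KST} to $G_i$ to get $i_b(G_i)\le n_i^{b-\eps_i}/b!$ with $\eps_i=\bigl(\log(n_i/\alpha^a)/(8ab\log n_i)\bigr)^b$, and then feed this into \Cref{lem:recursion} to extract an induced subgraph $G_{i+1}\subseteq G_i$ with $n_{i+1}:=|G_{i+1}|\le n_i^{1-\eps_i/2^{b-1}}$ satisfying $i(G_i)\le n_i^{b-1}\cdot i(G_{i+1})$. We iterate until the first index $K$ for which $n_K<\alpha^{2a}$.

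Convergence follows from the fact that whenever $n_i\ge \alpha^{2a}$, we have $\log(n_i/\alpha^a)\ge \tfrac12\log n_i$, so $\eps_i\ge c:=(16ab)^{-b}$, an absolute constant depending only on $a$ and $b$. The normalized sizes $r_i:=\log n_i/\log n$ therefore decay geometrically at rate $1-c/2^{b-1}$, and in particular the series $\sum_{i\ge 0}r_i$ sums to a constant $C_0=C_0(a,b)$. The multiplicative overhead in the iteration telescopes as
\[
\prod_{i=0}^{K-1} n_i^{b-1} \;=\; n^{(b-1)\sum_{i<K}r_i} \;\le\; n^{(b-1)C_0}.
\]
For the residual term, $n_K<\alpha^{2a}$ and $\alpha(G_K)\le \alpha$ yield $i(G_K)\le (\alpha+1)n_K^\alpha\le (\alpha+1)\alpha^{2a\alpha}$; multiplying out and absorbing the factor $(\alpha+1)\le n+1$ into the polynomial in $n$ gives the claimed bound.

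The delicate point is the constant lower bound $\eps_i\ge c$, which is what keeps the telescoping product polynomial in $n$ rather than super-polynomial; this is precisely why the iteration stops at $\alpha^{2a}$ rather than at the more natural threshold $\alpha^a$, at which $\log(n_i/\alpha^a)$ would vanish and $\eps_i$ would degrade to zero. The edge case $b=1$ needs no iteration at all: being $K_a$-free and \cliq{\alpha^a}{a} forces $n<\alpha^a$, and the trivial bound $(\alpha+1)n^\alpha$ already gives the desired conclusion.
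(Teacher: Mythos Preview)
Your proposal is correct and follows essentially the same approach as the paper. The paper phrases the argument as an induction on $n$ (with base case $n\le\alpha^{2a}$ and inductive step combining \Cref{lem:hypergraph-induced-KST} and \Cref{lem:recursion}), while you unroll this into an explicit iteration; the key ingredients---Ramsey giving \cliq{\alpha^a}{a}, the threshold $n\ge\alpha^{2a}$ ensuring $\eps\ge(16ab)^{-b}$, and the resulting geometric decay summing to $C=(b-1)2^{b-1}(16ab)^b$---are identical.
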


\begin{proof}
We will prove the theorem with $\eps=\eps(a,b):=(16ab)^{-b}$, $C=C(a,b)=(b-1)2^{b-1}/\eps$.
We proceed by induction on $n$. Observe first that if $n \le \alpha^{2a},$ then the desired inequality holds since $i(G)\le n^\alpha \le \alpha^{2a\alpha},$ as desired. Let us now assume that $n > \alpha^{2a}$ and that any induced subgraph $H$ of $G$ on $m<n$ vertices satisfies the desired inequality. In particular, this implies $i(G,m)\le m^C\cdot \alpha^{2a\alpha}$ for any $m <n$. 

Ramsey's Theorem implies that $G$ (and in fact any graph) is \cliq{\alpha^a}{a}.  \Cref{lem:hypergraph-induced-KST} implies there are at most $n^{b-\eps}/b!$ independent sets of size $b$ in $G$ since $$\left(\frac{\log \frac n{\alpha^a}}{8ab \log n}\right)^{b} \ge (16ab)^{-b}.$$ 
By \Cref{lem:recursion} this implies 
\begin{equation*}
    i(G) \le n^{b-1}\cdot i(G, n^{1-\eps/2^{b-1}}) \le {n}^{b-1}\cdot n^{C-C\eps/2^{b-1}}\cdot \alpha^{2a\alpha}= n^C\alpha^{2a\alpha}.
\end{equation*}
This completes the induction and the proof. 
\end{proof}
We note that by being more careful with the numbers in the above argument one can improve the bound to $n^{O(1)}\cdot \alpha^{(1+o(1))a\alpha}.$ 

We proceed with our result in the chi-bounded case, namely \Cref{thm:chi-bounded-intro}. The proof is similar to the above, the main distinction being that we can lower the base case to $n$ being linear in $\alpha$ rather than polynomial. This however comes with additional issues concerning the fact that \Cref{lem:hypergraph-induced-KST} stops giving us a polynomial gain in counts of small independent sets. The gain is still sufficient for our purposes though.

\begin{thm}\label{thm:chi-bounded}
    Let $a,b \ge 1$ be integers and let $\mathcal{G}$ be a chi-bounded, $bK_a$-free hereditary class of graphs. There exists $C=C(\mathcal{G})>0$ such that for every $G \in \mathcal{G}$ we have $i(G) \le |G|^{C} \cdot 2^{C\alpha(G)}.$    
\end{thm}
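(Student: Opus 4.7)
The plan is to follow the blueprint of the proof of \Cref{thm:H-free} but replace its base case $n \le \alpha^{2a}$ with the much smaller threshold $n \le O(\alpha)$, leveraging chi-boundedness. First, I would extract a constant $C_0 = C_0(\mathcal{G})$ so that every $G \in \mathcal{G}$ is $(C_0\alpha(G), a)$-cliquey. Indeed, any $K_a$-free induced subgraph $G'$ of $G$ lies in $\mathcal{G}$ and has $\omega(G') \le a-1$, hence $\chi(G') \le g(a-1)$ where $g$ is the $\chi$-binding function for $\mathcal{G}$. Thus $\alpha(G') \ge |G'|/g(a-1)$, and if $|G'| > g(a-1)\cdot \alpha(G)$ this contradicts $\alpha(G') \le \alpha(G)$. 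So setting $C_0 := g(a-1) + 1$, every member of $\mathcal{G}$ (and all of its induced subgraphs, since $\mathcal{G}$ is hereditary) is $(C_0\alpha, a)$-cliquey.

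Rather than a one-shot induction, I would iterate \Cref{lem:hypergraph-induced-KST} together with \Cref{lem:recursion} with $m := C_0\alpha$ fixed, producing a descending chain $G = G_0 \supseteq G_1 \supseteq \cdots \supseteq G_K$ of induced subgraphs where at each step $|G_{k+1}| \le |G_k|^{1-\eps_k/2^{b-1}}$ and $i(G_k) \le |G_k|^{b-1}\cdot i(G_{k+1})$, with $\eps_k = (\log(|G_k|/m)/(8ab\log|G_k|))^b$. I would stop at the first $K$ with $|G_K| \le 2m$, at which point the trivial bound $i(G_K) \le 2^{|G_K|} \le 4^{C_0\alpha}$ applies, giving $i(G) \le 4^{C_0\alpha}\cdot \prod_{k=0}^{K-1}|G_k|^{b-1}$.

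The main obstacle is bounding $\prod_k|G_k|^{b-1}$, since $\eps_k$ is no longer uniformly bounded below by a constant; it can degenerate to $\Theta(1/\log^b\alpha)$ when $|G_k|$ is close to $m$. To handle this I would split the iteration into two phases. In Phase A, while $|G_k| \ge m^2$, the ratio $\log(|G_k|/m)/\log|G_k|$ is at least $1/2$, so $\eps_k \ge \eps_A := (16ab)^{-b}$ is a genuine constant. Thus $\log|G_k|$ contracts by a fixed factor $1 - \eps_A/2^{b-1}$ per step and a geometric sum yields $\sum_{k \in A}\log|G_k| = O(\log n)$, hence $\prod_{k \in A}|G_k|^{b-1} \le n^{O(1)}$. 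In Phase B, where $2m < |G_k| \le m^2$, the weaker bound $\eps_k \ge (16ab\log m)^{-b} = \Omega(1/\log^b\alpha)$ still suffices to contract $\log|G_k|$ by a factor $1 - \Theta(1/\log^b\alpha)$ per step; since $\log|G_k|$ stays in a window of length $O(\log\alpha)$, the number of Phase B steps is $O(\log^b\alpha)$, yielding $\prod_{k \in B}|G_k|^{b-1} \le m^{O(\log^b\alpha)} = 2^{O(\log^{b+1}\alpha)} = 2^{o(\alpha)}$.

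Combining the three contributions gives $i(G) \le n^{O(1)} \cdot 2^{O(\alpha)} \cdot 2^{o(\alpha)} = n^{O(1)} \cdot 2^{O(\alpha)}$, as required. Two routine checks are needed along the way: each $G_k$ in the chain remains $bK_a$-free (automatic, as $\mathcal{G}$ and $bK_a$-freeness are hereditary) and $(C_0\alpha, a)$-cliquey (which follows since $\alpha(G_k) \le \alpha$ and being $(m,a)$-cliquey is monotone in $m$), so that \Cref{lem:hypergraph-induced-KST} applies uniformly at every step with the same value $m = C_0\alpha$.
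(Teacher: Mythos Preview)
Your proposal is correct and follows essentially the same approach as the paper: both establish $(O(\alpha),a)$-cliqueyness from chi-boundedness, then iterate \Cref{lem:hypergraph-induced-KST} and \Cref{lem:recursion}, splitting into the same two regimes (above $m^2$ with a constant $\eps$, and between $2m$ and $m^2$ with $\eps \approx 1/\log^b\alpha$) to obtain the $n^{O(1)}$ and $2^{O(\log^{b+1}\alpha)}=2^{o(\alpha)}$ contributions respectively. The only cosmetic differences are that you track an explicit chain of subgraphs where the paper works with the function $i(G,\cdot)$, and you use the slightly sharper constant $C_0=g(a-1)+1$ in place of $g(a)$.
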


\begin{proof}
Let $g$ be a non-decreasing integral chi-bounding function, so that $\frac{|G'|}{\alpha(G')}\le \chi(G') < g(\omega(G'))$ for any $G' \in \mathcal{G}$. Let $C=g(a)\cdot (16ab)^{2b}.$

Let $G \in \mathcal{G}$ be an $N$-vertex $bK_a$-free graph and let $\alpha=\alpha(G)$.

Let $G'$ be an induced subgraph of $G$ with $\alpha g(a)$ vertices. By our chi-boundedness assumption we have $\alpha(G)g(\omega(G')) \ge \alpha(G')g(\omega(G'))> |G'|=\alpha(G)g(a)$ so $\omega(G')>a$. In particular, this implies that $G$, as well as any of its induced subgraphs, are \cliq{\alpha g(a)}{a}. 

Let $m:=\alpha g(a)$ and $\eps_n:=\left(\frac{\log \frac nm}{8ab \log n}\right)^{b}$. We note that while $\eps_n$ is a function of $a,b,m$ as well as $n$ the values of $a,b,m$ will remain fixed throughout the argument. Now for any $n \ge m$ \Cref{lem:hypergraph-induced-KST} implies that any $n$-vertex subgraph of $G$ contains at most $n^{b-\eps}/b!$ independent sets of size $b$. This in turn via \Cref{lem:recursion} implies it has at most $n^{b-1} \cdot i(G,n^{1-\eps_n/2^{b-1}})$ independent sets in total. Since this subgraph was arbitrary we have that for any real $n \ge m$ we have
\begin{equation}\label{eq:recursion}
    i(G,n) \le n^{b-1} \cdot i\left(G,n^{1-\eps_n/2^{b-1}}\right).
\end{equation}

Suppose first that $2m\le  n \le m^2$. Then, combined with \eqref{eq:recursion} we get
$$ \eps_n= \left(\frac{\log \frac nm}{8ab \log n}\right)^{b} \ge \frac{1}{(16ab)^b\log^b m}=:c \implies  i(G,n) \le n^{b-1} \cdot i(G,n^{1-c}).$$
The main benefit compared to \eqref{eq:recursion} is that the exponent $c$ does not depend on $n$ (which is also why we require an assumption on the rough size of $n$). This makes it easier to iterate the bound to get for any integer $j \ge 1$ that:
\begin{align*}
    i(n,G) &\le n^{b-1} \cdot i\left(G,n^{1-c}\right)\\
    &\le n^{(b-1)j} \cdot i\left(G,\max\{n^{(1-c)^j},2m\}\right)\\
    &\le n^{(b-1)/c} \cdot i(G,2m)\\
    &\le (m^2)^{(b-1)(16ab)^b\log^{b} m}  \cdot 2^{2m} \le 2^{2(b-1)(16ab)^b\log^{b+1} m +2m}\le 2^{2b(16ab)^b(b+1)^{b+1} m}\le 2^{C\alpha}. 
\end{align*}

Suppose now $n \ge m^2$. This combined with \eqref{eq:recursion} gives
$$ \eps_n= \left(\frac{\log \frac nm}{8ab \log n}\right)^{b} \ge \frac{1}{(16ab)^b}=:c \implies  i(G,n) \le n^{b-1} \cdot i(G,n^{1-c}).$$

Similarly to the above, we get that in this range
\begin{align*}
    i(G,n) &\le n^{(b-1)(1+(1-c)+(1-c)^{2}+\ldots+(1-c)^{j})} \cdot i\left(G, \max\{n^{(1-c)^{j+1}},m^2\}\right)\\
    &\le n^{(b-1)/c}\cdot i\left(G,m^2\right) \le n^{(b-1)(16ab)^b} \cdot 2^{C\alpha}.
\end{align*}

Putting all of this together, since $i(G)=i(G,N)$ this completes the proof if $N \ge 2m$. In the remaining case the trivial bound of $2^{N}\le 2^{2g(a)\alpha}$ suffices.
\end{proof}    

We note that by being more careful with the numbers in the above argument one can improve the bound to $n^{O(1)} \cdot 2^{(1+o(1))g(a)\alpha}$. 

    \section{Concluding Remarks and Open problems}
    In this paper, we improve the trivial upper bound on the number of independent sets in $bK_a$-free graphs. One of the main points of interest in improving upper bounds on the number of independent sets in a variety of graphs is that it allows for reducing the number of events we need to run various union bound arguments. It would be very interesting to find such applications of our results.

    Another interesting future direction might be to try to obtain similar improvements under other structural restrictions.

    Our bounds are tight up to a constant in the exponent in general. It would be interesting to obtain optimal exponents, at least asymptotically. This certainly seems to require additional ideas. With this in mind, we sketch here an alternative argument, inspired by the one of Farber \cite{farber} that he used to settle the $2K_2$-free case, which can be used to prove both of our main results in the case one forbids a disjoint union of $(b-1)K_2$ and $K_a$. Instead of counting all independent sets, we will only count maximal ones. Since each maximal independent set contains at most $2^{\alpha(G)}$ independent sets of $G$ and every independent set is contained in at least one maximal one this shows the difference between two counts is at most $2^{\alpha(G)}$. The argument now proceeds by induction on $b$. The case $b=1$ follows immediately from Ramsey's Theorem. For larger $b$, let us fix an arbitrary vertex $v$. There are three types of maximal independent sets in $G$
    \begin{enumerate}
        \item Ones that do not contain $v$, and are hence also maximal in $G \setminus \{v\}$.
        \item Ones that do contain $v$, and are, after removal of $v$, maximal in $G \setminus \{v\}$
        \item Ones that do contain $v$, but are, after removal of $v$, not maximal in $G \setminus \{v\}$
    \end{enumerate}
    It is easy to see that the number of maximal independent sets of type 1. plus the number of maximal independent sets of type 2. equals the number of maximal independent sets of $G \setminus \{v\}$. On the other hand, for any maximal independent set of type 3. there must exist a vertex $u$ adjacent to $v$ but otherwise having no neighbours in the maximal independent set. This means that we can upper bound the number of such maximal independent sets by going through all neighbours $u$ of $v$ (at most $n$ choices) and counting maximal independent sets in the set of common non-neighbours of $v$ and $u$. The crucial observation is that this set must be $(b-2)K_2+K_a$-free as we could extend any such induced subgraph by $vu$ to a $(b-1)K_2+K_a$. So we can use induction to get a strong upper bound on the number of independent sets of type 3. which suffices to prove the desired bounds. 
    
    As we already mentioned the number of independent sets and the number of maximal independent sets are at most a factor of $2^{\alpha(G)}$ apart, so they behave similarly. 
    On the other hand, no such relation seems to hold with the count of maximum independent sets. This leads to the natural question of whether one can improve our results if instead of counting independent sets we count only maximum ones. For example, even the following initial question remains open.
    \begin{qn}
    Does every $n$-vertex triangle-free graph with independence number $\alpha$ contain at most $2^{O(\alpha)}$ maximum independent sets?    
    \end{qn}
    If true this would be tight (up to a constant factor in the exponent) by considering $nK_2$. Or, more generally, by taking a disjoint union of $mK_2$ and a triangle-free graph on $n-2m$ vertices with as small independence number as possible so as to show the above result would be tight even for essentially any choice of $\alpha$.

     Given a graph $G$ with weights on its vertices, the Maximum Weight Independent Set (MWIS) problem asks to find the independent set in $G$ of maximum weight. It is well-known that MWIS is (very) computationally difficult in general, and is in particular (strongly) NP-hard \cite{mwis}. This motivated a considerable amount of work on getting efficient algorithms for MWIS for graphs under various restrictions, see \cite{DMS} for a more detailed treatment of the history and many examples. For example, it is known that a polynomial time algorithm exists for graphs with bounded treewidth. In fact, combining the results of \cite{DFGKM,DMS} gives that it is enough to have a bounded independence number in every bag of some tree decomposition\footnote{See \cite{chudnovsky2024tree} for definitions of a variety of concepts discussed in this paragraph.}. 
     A natural next step is to explore whether this can be further relaxed to allow the independence number bound to grow with $n$, in particular, given several recent results (see \cite{chudnovsky2024tree,DMS,DFGKM} and references therein) showing various structural restrictions imply the existence of a tree decomposition with each bag having independence number at most polynomial in $\log n$. In addition, there is a polynomial time algorithm for MWIS if we are given a tree decomposition in which every bag has only polynomially many independent sets. All of this motivates the question of exploring which structural restrictions guarantee that there are only polynomially many independent sets in a graph with a small independence number. \Cref{thm:H-free-intro} tells us that an $n$-vertex, $bK_a$-free graph $G$ with $\alpha(G)\le O(\log n / \log \log n)$ has $n^{O(1)}$ independent sets. \Cref{thm:chi-bounded-intro} tells us that under the additional assumption that $G$ belongs to a chi-bounded class of graphs the same holds already when $\alpha(G) \le O(\log n)$.

\textbf{Acknowledgments.} We would like to thank Nicolas Trotignon for useful discussions in the early stages of this project. We would also like to thank Wojciech Samotij for multiple useful comments and for suggesting one of the ideas behind the proof of \Cref{lem:recursion}. The first author would like to gratefully acknowledge the support of the Oswald Veblen Fund.

\providecommand{\MR}[1]{}
\providecommand{\MRhref}[2]{%
  \href{http://www.ams.org/mathscinet-getitem?mr=#1}{#2}
}


\begin{thebibliography}{10}

\bibitem{noga-containers}
N.~Alon, \emph{Independent sets in regular graphs and sum-free subsets of finite groups}, Israel J. Math. \textbf{73} (1991), no.~2, 247--256. \MR{1135215}

\bibitem{axenovich2024induced}
M.~Axenovich and J.~Zimmermann, \emph{Induced {T}ur\'an problem in bipartite graphs}, preprint arXiv:2401.11296 (2024).

\bibitem{BMS}
J.~Balogh, R.~Morris, and W.~Samotij, \emph{Independent sets in hypergraphs}, J. Amer. Math. Soc. \textbf{28} (2015), no.~3, 669--709. \MR{3327533}

\bibitem{containers-survey}
J.~Balogh, R.~Morris, and W.~Samotij, \emph{The method of hypergraph containers}, Proceedings of the {I}nternational {C}ongress of {M}athematicians---{R}io de {J}aneiro 2018. {V}ol. {IV}. {I}nvited lectures, World Sci. Publ., Hackensack, NJ, 2018, pp.~3059--3092. \MR{3966523}

\bibitem{bohman-keevash}
T.~Bohman and P.~Keevash, \emph{Dynamic concentration of the triangle-free process}, Random Structures Algorithms \textbf{58} (2021), no.~2, 221--293. \MR{4201797}

\bibitem{bourneuf2023polynomial}
R.~Bourneuf, M.~Buci{\'c}, L.~Cook, and J.~Davies, \emph{On polynomial degree-boundedness}, preprint arXiv:2311.03341 (2023).

\bibitem{jacob-paper}
M.~Buci{\'c}, J.~Fox, and H.~T. Pham, \emph{Equivalence between {E}rd{\H{o}}s-{H}ajnal and polynomial {R}\"odl and {N}ikiforov conjectures}, preprint arXiv:2403.08303 (2024).

\bibitem{chudnovsky2024tree}
M.~Chudnovsky, S.~Hajebi, D.~Lokshtanov, and S.~Spirkl, \emph{Tree independence number {II}. {T}hree-path-configurations}, preprint arXiv:2405.00265 (2024).

\bibitem{dhruv}
J.~Cooper and D.~Mubayi, \emph{Counting independent sets in triangle-free graphs}, Proc. Amer. Math. Soc. \textbf{142} (2014), no.~10, 3325--3334. \MR{3238410}

\bibitem{DFGKM}
C.~Dallard, F.~V. Fomin, P.~A. Golovach, T.~Korhonen, and M.~Milani{\v{c}}, \emph{Computing tree decompositions with small independence number}, preprint arXiv:2207.09993 (2022).

\bibitem{DMS}
C.~Dallard, M.~Milani\v{c}, and K.~\v{S}torgel, \emph{Treewidth versus clique number. {II}. {T}ree-independence number}, J. Combin. Theory Ser. B \textbf{164} (2024), 404--442. \MR{4664382}

\bibitem{triangle-free-counts}
E.~Davies, M.~Jenssen, W.~Perkins, and B.~Roberts, \emph{On the average size of independent sets in triangle-free graphs}, Proc. Amer. Math. Soc. \textbf{146} (2018), no.~1, 111--124. \MR{3723125}

\bibitem{hypergraph-KST}
P.~Erd\H{o}s, \emph{On extremal problems of graphs and generalized graphs}, Israel J. Math. \textbf{2} (1964), 183--190. \MR{183654}

\bibitem{farber}
M.~Farber, \emph{On diameters and radii of bridged graphs}, Discrete Math. \textbf{73} (1989), no.~3, 249--260. \MR{983023}

\bibitem{fiz-pontiveros-morris}
G.~Fiz~Pontiveros, S.~Griffiths, and R.~Morris, \emph{The triangle-free process and the {R}amsey number {$R(3,k)$}}, Mem. Amer. Math. Soc. \textbf{263} (2020), no.~1274, v+125. \MR{4073152}

\bibitem{DRC-survey}
J.~Fox and B.~Sudakov, \emph{Dependent random choice}, Random Structures Algorithms \textbf{38} (2011), no.~1-2, 68--99. \MR{2768884}

\bibitem{girao2023induced}
A.~Gir{\~a}o and Z.~Hunter, \emph{Induced subdivisions in {$K_{s, s}$}-free graphs with polynomial average degree}, preprint arXiv:2310.18452 (2023).

\bibitem{hunter2024k}
Z.~Hunter, A.~Milojevi{\'c}, B.~Sudakov, and I.~Tomon, \emph{K{\H{o}}v\'ari-{S}\'os-{T}ur\'an theorem for hereditary families}, preprint arXiv:2401.10853 (2024).

\bibitem{illingworth2021induced}
F.~Illingworth, \emph{Graphs with no induced {$K_{2,t}$}}, Electron. J. Combin. \textbf{28} (2021), no.~1, Paper No. 1.19, 11. \MR{4245252}

\bibitem{mwis}
R.~Karp, \emph{Reducibility among combinatorial problems (1972)}, Ideas that created the future---classic papers of computer science, MIT Press, Cambridge, MA, [2021] \copyright 2021, pp.~349--356. \MR{4679190}

\bibitem{KW80}
D.~J. Kleitman and K.~J. Winston, \emph{The asymptotic number of lattices}, Ann. Discrete Math. \textbf{6} (1980), 243--249. \MR{593536}

\bibitem{KW82}
D.~J. Kleitman and K.~J. Winston, \emph{On the number of graphs without {$4$}-cycles}, Discrete Math. \textbf{41} (1982), no.~2, 167--172. \MR{676877}

\bibitem{KST}
T.~K\"{o}vari, V.~T. S\'{o}s, and P.~Tur\'{a}n, \emph{On a problem of {K}. {Z}arankiewicz}, Colloq. Math. \textbf{3} (1954), 50--57. \MR{65617}

\bibitem{sum-free}
V.~F. Lev, T.~\L{}uczak, and T.~Schoen, \emph{Sum-free sets in abelian groups}, Israel J. Math. \textbf{125} (2001), 347--367. \MR{1853817}

\bibitem{induced-KST}
P.-S. Loh, M.~Tait, C.~Timmons, and R.~M. Zhou, \emph{Induced {T}ur\'{a}n numbers}, Combin. Probab. Comput. \textbf{27} (2018), no.~2, 274--288. \MR{3778203}

\bibitem{MV}
S.~Mattheus and J.~Verstraete, \emph{The asymptotics of $ r (4, t) $}, Ann. of Math. (2023), to apppear.

\bibitem{samotij-couting-survey}
W.~Samotij, \emph{Counting independent sets in graphs}, European J. Combin. \textbf{48} (2015), 5--18. \MR{3339008}

\bibitem{sapozhenko-containers}
A.~A. Sapozhenko, \emph{On the number of independent sets in extenders}, Diskret. Mat. \textbf{13} (2001), no.~1, 56--62. \MR{1846037}

\bibitem{sapozhenko-sum-free}
A.~A. Sapozhenko, \emph{Asymptotics of the number of sum-free sets in abelian groups of even order}, Dokl. Akad. Nauk \textbf{383} (2002), no.~4, 454--457. \MR{1931699}

\bibitem{ST}
D.~Saxton and A.~Thomason, \emph{Hypergraph containers}, Invent. Math. \textbf{201} (2015), no.~3, 925--992. \MR{3385638}

\bibitem{chi-boundedness}
A.~Scott and P.~Seymour, \emph{A survey of {$\chi$}-boundedness}, J. Graph Theory \textbf{95} (2020), no.~3, 473--504. \MR{4174126}

\end{thebibliography}

\providecommand{\bysame}{\leavevmode\hbox to3em{\hrulefill}\thinspace}
\providecommand{\MR}{\relax\ifhmode\unskip\space\fi MR }
\providecommand{\MRhref}[2]{%
  \href{http://www.ams.org/mathscinet-getitem?mr=#1}{#2}
}
\providecommand{\href}[2]{#2}

\end{document}